\numberwithin{equation}{section}
\DeclareSymbolFont{SY}{U}{psy}{m}{n}
\DeclareMathSymbol{\emptyset}{\mathord}{SY}{'306}
\DeclareMathSymbol{\newtimes}{\mathbin}{SY}{'264}
\newcommand{\R}{\mathbb{R}}
\newcommand{\T}{\mathbb{T}}
\newcommand{\C}{\mathbb{C}}
\newcommand{\Z}{\mathbb{Z}}
\newcommand{\N}{\mathbb{N}}
\newcommand{\cA}{{\mathcal A}}
\newcommand{\cF}{{\mathcal F}}
\newcommand{\cH}{{\mathcal H}}
\newcommand{\cJ}{{\mathcal J}}
\newcommand{\cK}{{\mathcal K}}
\newcommand{\cL}{{\mathcal L}}
\newcommand{\cN}{{\mathcal N}}
\newcommand{\cP}{{\mathcal P}}
\newcommand{\cQ}{{\mathcal Q}}
\newcommand{\cR}{{\mathcal R}}
\newcommand{\cT}{{\mathcal T}}
\newcommand{\e}{\mathrm{e}}
\newcommand{\1}{\mathbbm 1}
\newtheorem{theorem}{Theorem}[section]{\bf}{\it}
\newtheorem{proposition}[theorem]{Proposition}{\bf}{\it}
{\bf}{\it}
\newtheorem{examples}[theorem]{Examples}{\it}{\rm}
{\it}{\rm}
\newtheorem{lemma}[theorem]{Lemma}{\bf}{\it}
\newtheorem{remark}[theorem]{Remark}{\it}{\rm}
{\it}{\rm}
\newtheorem{definition}[theorem]{Definition}{\bf}{\it}
{\bf}{\it}
{\bf}{\it}
{\bf}{\it}
\DeclareMathAlphabet{\Ma}{U}{msa}{m}{n}
\DeclareMathAlphabet{\Mb}{U}{msb}{m}{n}
\DeclareMathAlphabet{\Meuf}{U}{euf}{m}{n}
\DeclareSymbolFont{ASMa}{U}{msa}{m}{n}
\DeclareSymbolFont{ASMb}{U}{msb}{m}{n}
\DeclareMathSymbol{\hrist}{\mathord}{ASMa}{"16}
\DeclareMathSymbol{\varkappa}{\mathalpha}{ASMb}{"7B}
\DeclareMathSymbol{\CrPr}{\mathord}{ASMb}{"6F}
\def\got#1{\Meuf{#1}}
\def\ot #1.{{\got{#1}}}
\newcommand{\al}[1]{\mathcal{#1}}
\title{On spectral approximation, F\o lner sequences and crossed products}
\author[F.~Lled\'{o}]{Fernando Lled\'{o}}
\address{Department of Mathematics,
University Carlos~III, Madrid, Avda. de la Universidad 30, E-28911 Legan\'es
(Madrid), Spain and ICMAT, c. Nicol\'as Cabrera, 13-15
Campus Cantoblanco, UAM, 28049 Madrid}
\email{flledo@math.uc3m.es}
\subjclass[2010]{47L65, 46L60, 43A07} 
\keywords{spectral approximation, C*-algebras,
crossed products, F\o lner sequences, quasidiagonality, amenable groups, rotation algebra}
\date{\today}
\dedicatory{Dedicated to Paco Marcell\'an on his 60th birthday}
\thanks{This work is partly supported by the project MTM2009-12740-C03-01
of the Spanish {\em Ministerio de Ciencia e Innovaci\'on}. Parts of this 
article have been written, while the author was visiting 
CRM in Barcelona.}
\begin{document}

\begin{abstract}
In this article we study F\o lner sequences for operators and mention their relation
to spectral approximation problems.
We construct a canonical F\o lner sequence for the crossed product of a discrete amenable group $\Gamma$
with a concrete C*-algebra $\cA$ with a F\o lner sequence. We also 
state a compatibility condition for the action of $\Gamma$ on $\cA$.
We illustrate our results with
two examples: the rotation algebra (which contains interesting operators like almost Mathieu operators or
periodic magnetic Schr\"odinger operators on graphs) 
and the C*-algebra generated by bounded Jacobi operators. These examples can be interpreted
in the context of crossed products. The crossed products considered can be also seen as a more
general frame that included the set of generalized band-dominated operators.
\end{abstract}

\maketitle
\tableofcontents

%%%%%%%%%%%%%%%%%%%%%%%%%%%%%%%%%%%%%%%%%%%%%%%%%%%%%%%%%%%%%
\section{Introduction}\label{sec:intro}
%%%%%%%%%%%%%%%%%%%%%%%%%%%%%%%%%%%%%%%%%%%%%%%%%%%%%%%%%%%%%

Given a sequence of linear operators
$\{T_n\}_{n\in\N}$ in a complex separable Hilbert space $\cH$ that approximates
an operator $T$ in a suitable sense, a natural question is how do the spectral 
objects of $T$ (the spectrum, spectral measures, numerical ranges, pseudospectra etc.)
relate with those of $T_n$ as $n$ grows.
(Excellent books that include a large number of examples and references 
are, e.g., \cite{bChatelin83,bAhues01}.)
In such generality this problem is almost impossible to address. In fact, one
can easily produce examples with bad spectral approximation behavior,
where the spectrum suddenly expands or contracts in the limit
(see, e.g., pp.~289-291 in \cite{bReed80}). Also spectral pollution effects
or spurious eigenvalues can appear as a consequence of the approximation 
process (cf.~\cite{Davies04}).
One of the standard methods to treat these problems
is to compress $T$ to a finite dimensional subspace $\cH_n$
(the so-called finite-section) and, then, analyze the behavior of the eigenvalues of the matrix
$P_nT\upharpoonright \cH_n$ in the limit of large $n$. This method requires also 
additional conditions in order to guarantee a good approximation behavior of spectral objects.

The following classical approximation result for
Toeplitz operators due to Szeg\"o gives an example where the finite section
method can be used to approximate spectral measures. The following result can be seen
as a distributive version of Szeg\"o's classical limit theorems that involve determinants 
of Toeplitz matrices: 
denote by $\T$ the unit circle with normalized Haar measure $d\theta$
and consider the real-valued functions $g$ in $L^\infty(\T)$
which can be thought as (selfadjoint) multiplication operators on the 
complex Hilbert space $\cH:=L^2(\T)$, 
i.e., $M_g\,\varphi=g\;\varphi$, $\varphi\in\cH$. Denote by $P_n$ the finite-rank
orthogonal projection onto the linear span of $\{z^l\mid z\in\T, l=0,\dots,n\}$
and let $M_g^{(n)}$ be the corresponding finite section matrix. Write the corresponding
eigenvalues (repeated according to multiplicity) as $\{\lambda_{0,n},\dots,\lambda_{n,n}\}$.
Then, for any continuous $f\colon\R\to\R$ one has
\begin{equation}\label{szego}
 \lim_{n\to\infty}
  \frac{1}{n+1}
  \Big( f(\lambda_{0,n})+\dots+f(\lambda_{n,n})\Big) =\int_\T f(g(\theta)) \, d\theta
\end{equation}
(see \cite[Section~8]{Szego20}, \cite[Chapter~5]{bGrenander84} and \cite{WidomIn65}
for a careful analysis of this result; a recent standard book analyzing many aspects
of Toeplitz operators and containing a large number of references is 
\cite{bBoettcher06}). 
The equation (\ref{szego}) may be also reformulated in terms of weak*-convergence of 
the corresponding spectral measures and it allows to
approximate numerically the spectrum of $M_g$ 
in terms of the eigenvalues its finite sections (see \cite{ArvesonIn94}
as well as Chapter~7 in \cite{Roch08} and references cited therein).

Szeg\"o's classical result suggests the following question: what is the 
reason that guarantees the convergence of spectral measures and that 
can be possibly useful beyond the context of Toeplitz operators?
In the last two decades there has been a considerable application of
methods from operator algebras (mainly C*-algebras and von Neumann 
algebras\footnote{For the purposes of this article we will 
define a C*-algebra to be a *-subalgebra of $\cL(\cH)$ (the set 
of bounded linear operators in $\cH$) which is closed
in the topology defined by the operator norm $\|\cdot\|$. If $\cT\subset\cL(\cH)$
we will denote by $C^*(\cT)$ the C*-algebra generated by $\cT$. A von Neumann algebra
is an important subclass of the class of C*-algebras which is closed in the weak operator topology.})
to this question. In Subsection~\ref{subsec:foelner-algebra}
we will mention the F\o lner algebra, a unital C*-algebra 
which can be naturally defined for the finite section method. Moreover,
there are other natural C*-algebras that can be associated to sequences 
of finite sections (see Section~2 in \cite{Roch07} and references cited therein).

Arveson's seminal series of articles
\cite{Arveson93,Arveson94,ArvesonIn94} on these topics were directly inspired by 
Szeg\"o's theorem (see, e.g., \cite{Boettcher00,Kuijlaars00} for 
related developments in numerical analysis).
Among other interesting results, Arveson
gave conditions that guarantee that the essential spectrum of a
large class of selfadjoint
operators $T$ may be recovered from the sequence of eigenvalues
of certain finite dimensional compressions $T_n$
(see also Subsection~\ref{subsec:jacobi}). These results were then
refined by B\'edos who systematically applied the concept of F\o lner sequence 
to spectral approximation problems (see \cite{Bedos94,Bedos95,Bedos97}
as well as \cite{bHagen01}). 
It is stated in Section~7.2 of \cite{bHagen01} that SeLegue also considered
Szeg\"o-type theorems for Toeplitz operators in the context of C*-algebras.
Hansen extends some of the mentioned results
to the case of unbounded operators (cf.~\cite[\S~7]{Hansen08}; see also
\cite{Hansen11} for recent developments in the non-selfadjoint case). 
Brown shows in \cite{Brown06b} that abstract
results in C*-algebra theory can be applied to compute spectra of important
operators in mathematical physics like 
almost Mathieu operators or periodic magnetic Schr\"odinger operators on graphs.

We recall next two related notions that are important when addressing 
spectral approximation problems: 
let $\mathcal{T}\subset\cL(\cH)$ be a 
set of bounded linear operators 
on the complex separable Hilbert space $\mathcal{H}$. A sequence of non-zero  
finite rank orthogonal projections $\{P_i\}_{i\in \N}$ 
is called a F\o lner sequence for $\mathcal{T}$, if 
\begin{equation}\label{eq:foelner}
\lim_{i\to\infty} \frac{\|T P_i-P_i T\|_2}{\|P_i\|_2} = 0,\quad
T\in\cT\;,
\end{equation}
where $\|\cdot\|_2$ denotes the Hilbert-Schmidt norm. We call $\{P_i\}_{i\in \N}$
a proper F\o lner sequence if, in addition to (\ref{eq:foelner}), it is increasing
and converges strongly to $\1$.

The existence of a proper F\o lner sequence for a set of operators $\mathcal{T}$ is a
weaker notion than quasidiagonality which was introduced by
Halmos in the late sixties (cf.~\cite{Halmos70}).
Recall that a 
set of operators $\mathcal{T}\subset\mathcal{L}(\mathcal{H})$ 
is said to be quasidiagonal if there exists an increasing sequence of finite-rank projections
$\{P_i\}_{i}$ converging strongly to $\1$ and such that
\[
\lim_{i\to\infty}\|T P_i-P_i T\|=0\;,\quad\; T\in\mathcal{T}\;.
\]
It is easy to show that if $\{P_i\}_{i\in\N}$ quasidiagonalizes the set of operators
$\cT$, then it is also a proper F\o lner sequence for $\cT$ (for details see the next 
section). Moreover, Equation (\ref{eq:foelner}) can be
understood as a quasidiagonality condition, but relative to the growth of 
the dimension of the corresponding subspaces. 
Quasidiagonality is an important property in the analysis of the structure of 
C*-algebras (see, e.g., Chapter~7 in \cite{bBrown08} or 
\cite{BrownIn04,BlackadarIn04,Brown06a,Voiculescu93}) and is
also a very useful notion in spectral approximation problems. 
E.g.~quasidiagonality is assumed to prove the converge of 
spectra (in the selfadjoint case) and pseudospectra 
(cf.~\cite{Brown06} and \cite[\S~2]{Hansen08}). 
F\o lner sequences were introduced
in the context of operator algebras by Alain Connes in his seminal
article \cite[Section~V]{Connes76} (see also \cite{ConnesIn76,Popa86,PopaIn88}).
This notion is an algebraic analogue of F\o lner's characterization 
of amenable discrete groups (see Section~\ref{sec:foelner} for 
precise definitions) and was used by Connes as an essential tool in the
classification of injective type~II$_1$ factors.
The notion of proper F\o lner sequence is also interesting in the analysis of non-normal 
operators on Hilbert spaces and in relation of so-called finite operators (cf.~\cite{pLledoYakubovich12}).
In Section~\ref{sec:foelner} we state the main properties of
F\o lner sequences for operators that will be needed later and mention the relation to
quasidiagonality.

The third expression in the title of this article refers to crossed
products. This is a basic operator algebraic construction which is
interesting in its own right (see Section~\ref{sec:crossed} for details). 
The crossed product may be seen as a new C*-algebra 
constructed from a given C*-algebra which carries an action
of a group $\Gamma$. Many important operators in 
mathematical physics with very interesting spectral properties 
can be identified as elements of certain crossed products
(see, e.g., \cite{bBoca01,Lenz99} and Section~\ref{sec:applications} below).
The question when a crossed product of a quasidiagonal C*-algebra 
by an amenable group is 
again quasidiagonal has been addressed several times in the past
(see, e.g., Section~11 in \cite{BrownIn04} for some partial 
answers). In Lemma~3.6 of \cite{Pimsner80} it is shown that 
if $\cA$ is a unital separable quasidiagonal
C*-algebra with almost periodic group action 
$\alpha\colon\Z\to\mathrm{Aut}\cA$, then the C*-crossed-product
$\cA\rtimes_\alpha\Z$ is again quasidiagonal. This result has been
extended recently by Orfanos to C*-crossed products $\cA\rtimes_\alpha\Gamma$,
where now $\Gamma$ is a discrete, countable, amenable,
residually finite group (cf.~\cite{Orfanos10b}). In this more general case
there is again a certain compatibility condition on the group action of $\Gamma$ on $\cA$ is
needed. This result has been applied to generalized Bunce-Deddens algebras in
\cite{Orfanos10a}.

The existence of F\o lner sequences may 
be established in abstract terms, but this gives in general 
no clues of what are the concrete matrix approximations.
The aim of the present article is to give conditions and construct explicitly
F\o lner sequences for the crossed product of a C*-algebra $\cA$ that has
a F\o lner sequence and a discrete countable amenable group $\Gamma$
(see Theorem~\ref{pro:R} for a precise statement). We will state a sufficient
condition for the group action on the C*-algebra $\cA$ relative to the choices of F\o lner sequences
for the discrete group $\Gamma$. Our results
partly extend those of B\'edos for group von Neumann algebras
and are related to the articles of Orfanos
mentioned above. In Section~\ref{sec:applications} we will illustrate our
results in two well-known examples: Theorem~\ref{pro:R} can be applied
to the rotation algebra (also known as non-commutative torus), since it
can be seen as a crossed product of $C(\T)$ by $\Z$. 
This algebra contains interesting examples from the spectral point of view,
like almost Mathieu operators or discrete Schr\"odinger operators with
magnetic potentials, that have been widely studied in the literature
(see, e.g., \cite{bBoca01} and references cited therein). 
The rotation algebra provides also a non-trivial example were
we can verify the compatibility condition stated in Theorem~\ref{pro:R}.
The second example refers to the C*-algebra generated by bounded 
Jacobi operators on $\ell^2(\N)$. Also in this case we can have an interpretation
in terms of crossed products and we will mention some well-known results
for this class of operators (cf.~\cite{Roch08}). 
We conclude the article pointing out in Section~\ref{sec:outlook}
that Theorem~\ref{pro:R} extends the class of generalized band-dominated operators
as considered, e.g., in \cite{Roe05,pRabinovich10}.

%%%%%%%%%%%%%%%%%%%%%%%%%%%%%%%%%%%%%%%%%%%%%%%%%%%%%%%%%%%%%
\section{F\o lner sequences and quasidiagonality}\label{sec:foelner}
%%%%%%%%%%%%%%%%%%%%%%%%%%%%%%%%%%%%%%%%%%%%%%%%%%%%%%%%%%%%%
The notion of F\o lner sequences for operators has its origins in 
group theory. Recall that a
discrete countable group $\Gamma$ is amenable if it has an invariant
mean, i.e., there is a continuous linear functional $\psi$
on $\ell^\infty(\Gamma)$ with norm one and such that
\[
  \psi(u_\gamma f)=\psi(f)\;,\quad \gamma\in\Gamma\;,\quad f\in \ell^\infty(\Gamma)\;,
\]
where $u$ is the left-regular representation on $\ell^2(\Gamma)$.
A F\o lner sequence for 
$\Gamma$ is a sequence of non-empty finite subsets $\Gamma_i\subset\Gamma$ 
that satisfy
\begin{equation}\label{eq:foelner-g}
\lim_{i} 
\frac{|(\gamma \Gamma_i)\triangle
\Gamma_i|}{|\Gamma_i|} =0\qquad\text{for all}\quad \gamma\in\Gamma \,,
\end{equation}
where $\triangle$ denotes the symmetric difference and $|\Gamma_i|$
is the cardinality of $\Gamma_i$. Then, $\Gamma$ has a F\o lner sequence
if and only if $\Gamma$ is amenable (cf.~Chapter~4 in \cite{bPaterson88}).
Some authors require, in addition to Eq.~(\ref{eq:foelner-g}), that the 
sequence is increasing and complete, i.e., $\Gamma_i\subset\Gamma_j$ if 
$i\leq j$ and $\Gamma=\cup_i \Gamma_i$. We will not need these additional
assumptions here.

The counterpart of the previous definition in the context of operators is
given as follows:
\begin{definition}\label{def:Foelner}
Let $\mathcal{T}\subset\cL(\cH)$ be a set of bounded linear operators on the complex separable
Hilbert space $\mathcal{H}$. 
A sequence of non-zero 
finite rank orthogonal projections $\{P_n\}_{n\in \N}\subset\cL(\cH)$ 
is called a F\o lner sequence for $\mathcal{T}$ if  
\begin{equation}\label{eq:F1}
\lim_{n} \frac{\|T P_n-P_n T\|_2}{\|P_n\|_2} = 0\;\;,\quad T\in\cT\;,
\end{equation}
where $\|\cdot\|_2$ denotes the Hilbert-Schmidt norm. 
If the F\o lner sequence $\{P_n\}_{n\in \N}$ satisfies, in addition, that it is increasing
and converges strongly to $\1$, then we say it is a proper F\o lner sequence.
\end{definition}

We will state next some immediate consequences of the definition that will
be used later on.
To simplify expressions in the rest of the article we introduce the
commutator of two operators: $[A,B]:=AB-BA$.

\begin{proposition}\label{total}
Let $\cT\subset\cL(\cH)$ be a set of operators and $\{P_n\}_{n\in \N}$
a sequence of non-zero finite rank orthogonal projections.
\begin{itemize}
\item[(i)] $\{P_n\}_{n\in \N}$ is a F\o lner sequence
for $\cT$ iff it is a F\o lner sequence for $C^*(\cT\,,\,\1)$ 
(the C*-algebra generated by $\cT$ and the identity $\1$).

\item[(ii)]
Let $\cT$ be a selfadjoint set (i.e., $\cT^*=\cT$). Then
$\{P_n\}_{n\in \N}$ is a F\o lner sequence for $\cT$ if one of
the four following equivalent conditions holds for all $T\in\cT$:
\begin{equation}\label{F1}
\lim_{n} \frac{\|T P_n-P_n T\|_p}{\|P_n\|_p} = 0,\qquad
p\in\{1,2\}
\end{equation}
or
\begin{equation}\label{F2}
\lim_{n} \frac{\|(I-P_n)T P_n\|_p}{\|P_n\|_p} = 0,\qquad
p\in\{1,2\}\;,
\end{equation}
where $\|\cdot\|_1$ and $\|\cdot\|_2$ are the trace-class and Hilbert-Schmidt
norms, respectively.
\end{itemize}
\end{proposition}
\begin{proof}
(i) We just have to show that if $\{P_n\}_{n\in\N}$ is a F\o lner sequence for $\cT$ and $\1$, then
it is a F\o lner sequence for $C^*(\cT\,,\,\1)$. For $R,T\in\cT$ the following elementary 
relations
\begin{eqnarray*}
 \|[R\,T, P_n] \|_2 & \leq&\|R\,[T, P_n] \|_2 + \|[R, P_n]\,T \|_2 
                    \;\;\leq\;\; 
                    \|R\|\,\|[T, P_n] \|_2 + \|[R, P_n]\|_2\,\|T \| \\[2mm]
 \|[T^*, P_n] \|_2 &=& \|[T, P_n]^* \|_2 \;\;=\;\;\|[T, P_n] \|_2 
\end{eqnarray*}
show that $\{P_n\}_{n\in\N}$ is a F\o lner sequence for the *-algebra $\widetilde{\cT}$ generated 
by $\cT$. Then it is a standard $\frac{\varepsilon}{2}$-argument to show
that $\{P_n\}_{n\in\N}$ is still a F\o lner sequence for the norm closure of $\widetilde{\cT}$.

(ii) By the previous item we have that $\{P_n\}_{n\in \N}$ is a F\o lner sequence for
$\cT$ iff it is a F\o lner sequence for $C^*(\cT\,,\,\1)$ and
we can apply Lemma~1 in \cite{Bedos97}.
\end{proof}

The existence of a proper F\o lner sequence for an operator $T\in\cL(\cH)$ has the following absorbing 
property for direct sums. The proof of this fact is from the first versions of \cite{pLledoYakubovich12}.

\begin{proposition}\label{absorbing}
Let $\cH$ and $\cH'$ be separable Hilbert spaces with $\dim\cH=\infty$.
If $T$ has a proper F\o lner sequence, then $T\oplus X\in\cL(\cH\oplus\cH')$ has a 
proper F\o lner sequence for {\em any} $X\in\cL(\cH')$.
\end{proposition}
\begin{proof}
Let $\{P_n\}_{n\in \N}$ be a  proper F\o lner sequence for $T$ and since 
the sequence of projections is increasing we may assume 
that $\dim P_n\cH\geq n^2\;$. Let $\{\e_i\mid i\in\N\}$ be an orthonormal
basis of $\cH'$ and denote by $Q_n$ the orthogonal projection onto
span$\{e_1,\dots,e_n\}\subset\cH'$.
Then the following calculation shows that $\{P_n\oplus Q_n\}_n$ is a proper F\o lner sequence
for $T\oplus X$, $X\in\cL(\cH')$:
\begin{eqnarray*}
\frac{ \left\| \big[ T\oplus X,P_n\oplus Q_n\big]\right\|_2^2}{\|P_n\oplus Q_n\|^2_2}
  &=& \frac{\left\|[T,P_n]\right\|^2_2+\left\|[X,Q_n]\right\|_2^2}{\|P_n\|^2_2+n}\\
  &\leq &  \frac{\left\|[T,P_n]\right\|^2_2}{\|P_n\|^2_2}
          +\frac{4\left\|Q_n\right\|^2_2\;\left\|X\right\|^2}{n^2+n} \\
  &=& \frac{\left\|[T,P_n]\right\|^2_2}{\|P_n\|^2_2}
          +4\left\|X\right\| \frac{n}{n^2+n} \quad\to\quad 0\, 
\end{eqnarray*}
and the proof is concluded.
\end{proof}

The existence of a F\o lner sequence has important structural consequences. 
For the next result we need to recall the following notion:
a hypertrace for a C*-algebra $\cA$ acting on a Hilbert
space $\cH$ is a state $\Psi$ on $B(\cH)$ that is centralized by
$\cA$, i.e.
\begin{equation*}
\Psi(X A) = \Psi(A X)\;,\quad X\in B(\cH)\;,\;A\in\cA\,.
\end{equation*}
Hypertraces are the algebraic analogue of the invariant mean mentioned
at the beginning of this section (cf.~\cite{Connes76,ConnesIn76,Bedos97}).
We mention some easy operator algebraic consequence of the existence of a
F\o lner sequence for a C*-algebra (see also \cite{Bedos95,pAraLledop12}).
\begin{proposition} 
  Let $\cA\subset\cL(\cH)$ be a separable C*-algebra. If $\cA$ has a 
  F\o lner sequence, then $\cA$ has a hypertrace.
\end{proposition}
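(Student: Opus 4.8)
The plan is to prove the two implications separately. ``F\o lner net $\Rightarrow$ hypertrace'' is a soft weak-$*$ compactness argument, while the converse requires a Hahn--Banach separation to produce an almost-invariant density operator, followed by an operator version of Namioka's ``layer-cake'' argument to replace it by a genuine projection; this last step is where the real work lies.

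\textbf{F\o lner net $\Rightarrow$ hypertrace.} Given a F\o lner net $\{P_i\}_{i\in I}$ for $\cA$, I would put $\psi_i(X):=\tr(P_iXP_i)/\tr P_i$ for $X\in B(\cH)$, which is a state on $B(\cH)$ since $\psi_i(X^*X)=\|XP_i\|_2^2/\tr P_i\ge0$ and $\psi_i(I)=1$. The state space of $B(\cH)$ being weak-$*$ compact, the net $\{\psi_i\}_i$ has a weak-$*$ convergent subnet, say with limit the state $\Psi$. Cyclicity of the trace and $P_i^2=P_i$ give, for $X\in B(\cH)$ and $A\in\cA$,
\[
\psi_i(XA)-\psi_i(AX)=\frac{\tr\!\big(X(AP_i-P_iA)\big)}{\tr P_i}\,,
\]
whence $|\psi_i(XA)-\psi_i(AX)|\le\|X\|\,\|AP_i-P_iA\|_1/\|P_i\|_1$, which tends to $0$ by~(\ref{F1}) with $p=1$. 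Passing to the subnet, $\Psi(XA)=\Psi(AX)$ for all $X\in B(\cH)$ and $A\in\cA$, i.e. $\Psi$ is a hypertrace.

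\textbf{Hypertrace $\Rightarrow$ F\o lner net.} Let $\Psi$ be a hypertrace for $\cA$. Since $\Psi$ also centralises $\cA+\C I$ and a F\o lner net for $\cA+\C I$ is one for $\cA$, I may assume $\cA$ is unital, so that $\cA$ is the closed linear span of its unitary group $\cU$. Fix a finite set $F=\{u_1,\dots,u_m\}\subset\cU$ and $\epsilon>0$. It suffices to produce a non-zero finite rank projection $P$ with $\|u_kPu_k^*-P\|_1<\epsilon\,\|P\|_1$ for all $k$ (equivalently, $\|u_kP-Pu_k\|_1<\epsilon\|P\|_1$, by unitary invariance of $\|\cdot\|_1$), because then, letting $(F,\epsilon)$ run over the evident directed set of such pairs, one obtains a net of projections satisfying~(\ref{F1}) with $p=1$ on $\cU$, hence a F\o lner net for $\cA$ by Proposition~\ref{total}. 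To build $P$, consider on the convex set $\mathcal S$ of density operators the affine map $\Phi(\rho):=(u_k\rho u_k^*-\rho)_{k=1}^m$ into $\bigoplus_{k=1}^m\cL^1(\cH)$ (the trace-class ideal) equipped with the norm $\sum_k\|\cdot\|_1$. I claim $0\in\overline{\Phi(\mathcal S)}$: if not, Hahn--Banach separation gives $(X_k)_k\in\bigoplus_kB(\cH)=\big(\bigoplus_k\cL^1(\cH)\big)^*$ and $\delta>0$ with $\Re\,\tr(Y\rho)\ge\delta$ for all $\rho\in\mathcal S$, where $Y:=\sum_k(u_k^*X_ku_k-X_k)$; testing on rank-one $\rho$ forces $\tfrac12(Y+Y^*)\ge\delta I$. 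But the centralising property gives $\Psi(u_k^*X_ku_k)=\Psi(X_k)$, so $\Psi(Y)=0$ and $\Psi\!\big(\tfrac12(Y+Y^*)\big)=\Re\Psi(Y)=0$, a contradiction. Hence there is a density operator $\rho$ with $\sum_k\|u_k\rho u_k^*-\rho\|_1<\epsilon$. Now write $\rho=\int_0^\infty P_t\,dt$ with $P_t:=\1_{(t,\infty)}(\rho)$, a finite rank projection for $t>0$, so that $\int_0^\infty\|P_t\|_1\,dt=\|\rho\|_1=1$. The operator analogue of Namioka's layer-cake trick (discussed below) provides an absolute constant $C$ with $\int_0^\infty\|u_kP_tu_k^*-P_t\|_1\,dt\le C\,\|u_k\rho u_k^*-\rho\|_1$ for each $k$, so that
\[
\int_0^\infty\Big(\sum_{k=1}^m\|u_kP_tu_k^*-P_t\|_1\Big)\,dt\le C\epsilon=C\epsilon\int_0^\infty\|P_t\|_1\,dt,
\]
and therefore some $t_0>0$ satisfies $\sum_k\|u_kP_{t_0}u_k^*-P_{t_0}\|_1<C\epsilon\,\|P_{t_0}\|_1$; then $P:=P_{t_0}$ works, the constant $C$ being harmless as $\epsilon$ is arbitrary.

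The soft ingredients — the compactness argument, Proposition~\ref{total}, and the Hahn--Banach step — are routine, and I expect the main obstacle to be the layer-cake inequality $\int_0^\infty\|uP_tu^*-P_t\|_1\,dt\le C\,\|u\rho u^*-\rho\|_1$. Its commutative counterpart is the exact identity $\|a-b\|_1=\int_0^\infty\|\1_{(t,\infty)}(a)-\1_{(t,\infty)}(b)\|_1\,dt$ for commuting positive $a,b$ (the pointwise layer-cake formula that makes Namioka's trick work for groups); for non-commuting operators this becomes merely an inequality, and the task is to bound the ratio of the two sides by an absolute constant. A natural route is to pass first to $\rho^{1/2}$ via the Powers--St{\o}rmer inequality $\|u\rho^{1/2}u^*-\rho^{1/2}\|_2^2\le\|u\rho u^*-\rho\|_1$ and then run the layer-cake estimate in Hilbert--Schmidt norm on the positive operator $\rho^{1/2}$. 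I would isolate this as a separate lemma (or quote it from the literature); it is the one point that needs a genuine operator-theoretic computation rather than soft functional analysis.
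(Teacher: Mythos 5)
The paper does not actually prove this proposition --- it is quoted from the literature (B\'edos \cite{Bedos95}, going back to Connes \cite{Connes76}) --- so there is no in-text argument to compare against; judged on its own, your proposal follows the standard Connes--B\'edos route. The direction ``F\o lner net $\Rightarrow$ hypertrace'' is complete and correct. In the converse direction the soft steps are also fine: the reduction to unitaries, the Hahn--Banach/Day argument producing a density operator $\rho$ with $\sum_k\|u_k\rho u_k^*-\rho\|_1<\epsilon$ (your verification that $\tfrac12(Y+Y^*)\ge\delta I$ contradicts $\Re\Psi(Y)=0$ is exactly right), and the final averaging over $t$.

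The genuine gap is precisely where you suspect it: the inequality $\int_0^\infty\|uP_tu^*-P_t\|_1\,dt\le C\,\|u\rho u^*-\rho\|_1$ with an absolute constant $C$ is not a result you can simply quote, and your main argument rests entirely on it. Note that writing $\rho=\int_0^\infty P_t\,dt$ and applying the triangle inequality gives the \emph{reverse} inequality for free; the direction you need is the nontrivial one, the exact layer-cake identity genuinely fails for non-commuting operators (already in $2\times 2$ examples the ratio exceeds $1$), and I am not aware of a trace-norm version with a universal constant in the literature. The repair you sketch in your last paragraph is the correct and standard one, but it changes the bookkeeping and needs to be carried out: apply Powers--St\o rmer to get $\|u_k\rho^{1/2}u_k^*-\rho^{1/2}\|_2^2\le\|u_k\rho u_k^*-\rho\|_1<\epsilon$, then use Connes's Hilbert--Schmidt layer-cake lemma for the spectral projections $P_t=\1_{(t,\infty)}(\rho^{1/2})$, which bounds $\int_0^\infty\|u_kP_tu_k^*-P_t\|_2^2\,dt$ by a quantity of order $\sqrt{\epsilon}$, and combine it with $\int_0^\infty\|P_t\|_2^2\,dt=\tr(\rho^{1/2})\ge\|\rho^{1/2}\|_2^2/\|\rho^{1/2}\|\ge 1$ (this lower bound matters, since $\rho^{1/2}$ need not be trace class). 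The averaging over $t$ then yields a non-zero finite-rank spectral projection $P_{t_0}$ with $\|u_kP_{t_0}-P_{t_0}u_k\|_2\le C(m)\,\epsilon^{1/4}\|P_{t_0}\|_2$ for all $k$, which verifies condition (\ref{F1}) with $p=2$; this suffices by the equivalence of the four conditions in Definition~\ref{def:Foelner}. In short, the architecture of your proof is the right one, but the single lemma you defer is the real content of the hard direction, and as written the argument is not complete.
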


\begin{examples}\label{ex:shift}
\begin{itemize}
 \item[(i)] 
The unilateral shift is a canonical example that shows the difference between
the notions of F\o lner sequences and quasidiagonality. On the one hand,
it is a well-known fact that the unilateral unilateral shift $S$
is not a quasidiagonal operator. (This was shown by Halmos in \cite{Halmos68}; in 
fact, in this reference it is shown that $S$ is not even quasitriangular.) 
If $\cA$ is a (unital) C*-algebra 
containing a proper (i.e., non-unitary) isometry, then it {\em not} quasidiagonal
(see, e.g.~\cite{BrownIn04,bBrown08}). 
Finally, it can be shown that certain weighted shifts are quasidiagonal (cf.~\cite{Smucker82}).

On the other hand, it is easy to find a canonical proper F\o lner sequence for $S$. In fact, define
$S$ on $\cH:=\ell^2(\N_0)$ by $Se_i:=e_{i+1}$, where 
$\{e_i\mid i=0,1,2,\dots\}$ is the canonical basis of $\cH$
and consider for any $n$ the orthogonal projections $P_n$ onto span$\{e_i\mid i=0,1,2,\dots, n\}$.
Then
\[
 \big\|[P_n,S]\big\|_2^2=\sum_{i=1}^\infty \Big\|[P_n,S]e_i\Big\|^2=\|e_{n+1}\|^2=1
\]
and 
\[
\frac{\big\|[P_n,S]\big\|_2}{\| P_n\|_2}=\frac{1}{\sqrt{n+1}}\;
                                         \mathop{\longrightarrow}\limits_{n\to\infty} \;0\;.
\]
A similar argument shows directly that $\{P_n\}_n$ is a proper F\o lner sequence for any power
$S^k$, $k\in\N$. By Proposition~\ref{total}~(i) it follows that $\{P_n\}_n$ is also a proper F\o lner
sequence for the Toeplitz algebra $C^*(S)$. 

\item[(ii)] Let $T\in\cL(\cH)$ be a selfadjoint operator. If a sequence 
of non-zero finite rank orthogonal operators $\{P_n\}_n$ satisfies
\[
 \sup_{n\in\N}\|(\1-P_n)TP_n\|_2<\infty\;,
\]
then Eq.~(\ref{F2}) implies that $\{P_n\}_n$ is clearly a F\o lner sequence for $T$. Concrete examples satisfying
the preceding condition are selfadjoint operators with a band-limited
matrix representation (see, e.g.,~\cite{Arveson94,ArvesonIn94}).
Band limited operators together with quasidiagonal operators are the
essential ingredients in the solution of
Herrero's approximation problem, i.e.~the characterization of the closure
of block diagonal operators with bounded blocks
(see Chapter~16 in \cite{bBrown08} for a comprehensive presentation).

\end{itemize}
\end{examples}

%%%%%%%%%%%%%%%%%%%%%%%%%%%%%%%%%%%%%%%%%%%%%%%%%%%%%%%%%%%%%
\subsection{F\o lner algebra}\label{subsec:foelner-algebra}
%%%%%%%%%%%%%%%%%%%%%%%%%%%%%%%%%%%%%%%%%%%%%%%%%%%%%%%%%%%%%
In the study of growth properties of C*-algebras (and motivated
by previous work done by Arveson and B\'edos)
Vaillant defined the following
natural unital C*-algebra (see Section~3 in \cite{Vaillant96}):
given an increasing sequence $\cP:=\{P_n\}_n\subset\cL(\cH)$ 
of orthogonal finite rank projections strongly converging to 
$\1$, consider the set of all bounded
linear operators in $\cH$ that have $\cP$ as a proper F\o lner sequence, i.e.,
\[
 \cF_{\cP}(\cH):=\left\{
                 X\in \cL(\cH) \mid
                 \lim_{n\to\infty} \frac{\|X P_n-P_n X\|_2}{\|P_n\|_2} = 0
                 \right\}\;.
\]
This unital C*-subalgebra of $\cL(\cH)$ (called F\o lner algebra by Hagen, Roch
and Silbermann in Section~7.2.1 of \cite{bHagen01}) has shown to be very 
useful in the analysis of the classical Szeg\"o limit theorems for 
Toeplitz operators and some generalizations of them
(see, e.g., Section~7.2 of \cite{bHagen01} and \cite{Boettcher05}). 

In general, $\cF_{\cP}$ will not be separable for the operator norm.
This can be easily seen from the absorbing properties of proper F\o lner 
sequences for direct sums as shown in the proof of Proposition~\ref{absorbing}. 
In fact, it is possible that a F\o lner algebra contains a full copy
of some $\cL(\cH)$. 
Finally, let us mention that subalgebras of the F\o lner algebra
are also interesting from an abstract operatoralgebraic point of view. We refer
to \cite{Bedos95,pAraLledop12} for further developments in this direction.

%%%%%%%%%%%%%%%%%%%%%%%%%%%%%%%%%%%%%%%%%%%%%%%%%%%%%%%%%%%%%
\section{F\o lner sequences for crossed products}\label{sec:crossed}
%%%%%%%%%%%%%%%%%%%%%%%%%%%%%%%%%%%%%%%%%%%%%%%%%%%%%%%%%%%%%

The crossed product may be seen as a new C*-algebra 
constructed from a given C*-algebra which carries an action
of a group $\Gamma$. This procedure 
goes back to the pioneering work of Murray and von Neumann on rings of 
operators. Algebraically, this construction has some similarities 
with the semi-direct product of groups.
Standard references which present the crossed product construction
with small variations are \cite[Chapter~VIII]{bDavidson96},
\cite[Chapter~4]{bSunder87}, \cite[Section~V.7]{Takesaki:1} or
\cite[Section~8.6 and Chapter~13]{Kadison:Ringrose:2}. Since all groups $\Gamma$
considered will be amenable (and countable) 
we will not distinguish between crossed products and reduced crossed products.

Throughout this section $\mathcal{A}$ denotes a concrete C*-algebra
acting on a complex separable Hilbert space $\mathcal H$. We shall assume
that $\alpha$ is an automorphic representation of a countable discrete
amenable group $\Gamma$ on $\mathcal A$, i.e.
\begin{equation*}
\alpha\colon \Gamma \to\mathrm{Aut}\,\cA\;.
\end{equation*}

The crossed product is a new C*-algebra constructed with the
previous ingredients and acting on the separable Hilbert space
\begin{equation}\label{eq:decomp}
 \cK:=\ell^2(\Gamma )\otimes\cH\cong\mathop{\oplus}\limits_{\gamma\in\Gamma }\cH_\gamma\;,
\end{equation}
where $\cH_\gamma\equiv\cH$ for all $\gamma\in\Gamma $. To make
this notion precise we introduce representations of $\al A$ and $\Gamma $
on $\al K$: for $\xi=(\xi_\gamma)_\gamma\in\al K$ we define
\begin{eqnarray}\label{abelianM}
\left(\pi(M)\xi\right)_\gamma &:=&\alpha^{-1}_\gamma(M)\;\xi_\gamma
\;,\quad M\in\cA, \\ \label{lrr}
\left(U(\gamma_0\right)\xi)_\gamma&:=&\xi_{\gamma_0^{-1}\gamma} \,.
\end{eqnarray}

The crossed product of $\cA$ by $\Gamma $ is the C*-algebra on $\cK$
generated by these operators, i.e.,
\begin{equation*}
  \cN=\cA\rtimes_\alpha\Gamma 
    := C^*\Big(
        \left\{\pi(M)\mid M\in\cA\right\}\cup
        \left\{ U(\gamma)\mid\gamma\in\Gamma \right\}
       \Big)
     \subset\al L(\cK)\,,
\end{equation*} 
where $C^*(\cdot)$ denotes the C*-algebra generated by its argument.
A characteristic relation for the crossed product is
\begin{equation}\label{main}
\pi\left(\alpha_\gamma(M)\right)=U(\gamma)\pi(M)U(\gamma)^{-1}\,,
\end{equation}
that is, $\pi$ is a covariant representation of the $C^*$-dynamical system
$(\cA, \Gamma ,\alpha)$. 

\begin{remark}\label{circulant}
   Later we will need the following 
   useful operator matrix characterization of the elements in the 
   crossed product: consider
   the identification $\cK\cong\mathop{\oplus}\limits_{\gamma\in\Gamma }\cH_\gamma$ with
   $\cH_\gamma\equiv\cH$, $\gamma\in\Gamma$. 
   Then, every $T\in\cL(\cK)$ can be written as a 
   matrix $(T_{\gamma'\gamma})_{\gamma',\gamma\in\Gamma }$ with entries
   $T_{\gamma'\gamma}\in\cL(\cH)$. Any element $N$ in the crossed
   product $\cN\subset\cL(\cK)$ has the form
\begin{equation}\label{eq:matrix-n}
   N_{\gamma'\gamma}=\alpha_\gamma^{-1}\left( A(\gamma'\gamma^{-1})\right)
                   \;,\quad \gamma'\,,\,\gamma\in\Gamma \,,
 \end{equation}
 for some mapping $A\colon\Gamma \to\cA\subset\cL(\cH)$. Roughly, this means
 that the ``diagonals'' of the operator matrices of elements in the crossed
 product $\cN$ are orbits of the group action on elements in $\cA$.
  
  For example, the matrix form of the product of generators 
  $N:=\pi(M)\cdot U(\gamma_0)$, $M\in\cA$, $\gamma_0\in\Gamma$ is given by
\begin{equation}\label{matrix-generators}
 N_{\gamma'\gamma}=\alpha_{\gamma'}^{-1}(M) \, \delta_{\gamma',\gamma_0\gamma}
                  =\alpha_{\gamma'}^{-1}\left(A(\gamma'\gamma^{-1})\right)\;,
 \mathrm{where}\;
 A(\widetilde{\gamma}):=\left\{\begin{array}{l}
                                        M\;\mathrm{if}\;\;\widetilde{\gamma}=\gamma_0\\
                                        0\;\;\;\mathrm{otherwise}
                                      \end{array}\right. . 
\end{equation}
 
This implies that any function $A\colon \Gamma\to \cA$ with finite support determines
by means of Eq.~(\ref{eq:matrix-n}) an element in the crossed product.
\end{remark}

%%%%%%%%%%%%%%%%%%%%%%%%%%%%%%%%%%%%%%%%%%%%%%%%%%%%%%%%%%%%%%%%%%%%%%%%%%%%
\subsection{Construction of a canonical F\o lner sequence}\label{subsec:foelner-cross}
%%%%%%%%%%%%%%%%%%%%%%%%%%%%%%%%%%%%%%%%%%%%%%%%%%%%%%%%%%%%%%%%%%%%%%%%%%%%

The aim of the present section is to give a canonical example of  
a F\o lner sequence for the crossed product C*-algebra $\cN=\cA\rtimes_\alpha\Gamma$
constructed above. Since $\cN\subset\cL(\cK)$ with
$\cK=\ell^2(\Gamma )\otimes\cH$, our sequence is canonical in the 
sense that it uses explicitly a F\o lner sequence for $\Gamma$ and a sequence of projections on $\cH$ 
(cf.~Theorem~\ref{pro:R}).
We will also assume that the unital C*-algebra $\cA\subset\mathcal{L}(\cH)$ is
separable and has a F\o lner sequence $\{Q_i\}_{i\in\N}$. 

We begin recalling some parts of Proposition~4 in \cite{Bedos97}:
\begin{proposition}\label{pro:group-part}
Assume that the group $\Gamma$ is countable and amenable and denote by 
$\{P_i\}_{i\in\N}$ the sequence of orthogonal finite-rank projections 
from $\ell^2(\Gamma)$ onto $\ell^2(\Gamma_i)$ associated to 
a F\o lner sequence $\{\Gamma_i\}_{i\in\N}$ for the group $\Gamma$
(cf.~Section~\ref{sec:foelner}).
Then $\{P_i\}_i$ is a F\o lner sequence for the group
C*-algebra
\[
 \cA_\Gamma:=C^*\{\overline{U}(\gamma)\mid \gamma\in\Gamma\}\subset \cL(\ell^2(\Gamma))\;,
\]
where $\overline{U}$ is the left regular representation of $\Gamma$ on $\ell^2(\Gamma)$.
\end{proposition}

\begin{remark}
\item[(i)] 
In Proposition~4 of \cite{Bedos97} the author proves a stronger result. He shows 
that the canonical F\o lner net $\{P_i\}_{i\in\N}$ for the algebra and associated to the F\o lner net
of the (not necessarily countable) amenable group is still a F\o lner net for the corresponding
group von Neumann algebra, i.e., for the weak operator closure of $\cA_\Gamma$ in
$\cL(\ell^2(\Gamma))$. In general, it is not true that a F\o lner sequence for a concrete
C*-algebra is also a F\o lner sequence for its weak closure.
\item[(ii)] 
Recall that the preceding proposition means that the sequence $\{P_i\}_i$ satisfies
the four equivalent conditions in Proposition~\ref{total}~(ii) for any
element in the group von Neumann algebra.
\end{remark}

\begin{theorem}\label{pro:R}
Let $\cA\subset\mathcal{L}(\cH)$ be a separable C*-algebra 
which has a F\o lner sequence $\{Q_i\}_{i\in\N}$. Consider the 
countable and amenable group $\Gamma$ 
and denote by $\{P_i\}_{i\in\N}$ the sequence of orthogonal finite-rank projections 
from $\ell^2(\Gamma)$ onto $\ell^2(\Gamma_i)$ associated to 
a F\o lner sequence $\{\Gamma_i\}_{i\in\N}$ for the group $\Gamma$.
Assume that there is an action of $\Gamma$ on $\cA$ that satisfies:
\begin{equation}\label{compatibility}
  \lim_{i} 
  \left(
   \mathop{\mathrm{max}}\limits_{\gamma\in\Gamma_i}
   \frac{\|\left[Q_i,\alpha^{-1}_\gamma(M)\right]\|_{2}}{\|Q_i\|_2}
  \right)=0\;, \quad\mathrm{for~all}\;\; M\in\cA\;.
\end{equation}
Then the sequence
$\{R_{i}\}_{i\in \N}$ with
\[
 R_{i}:=P_i \otimes Q_i
\]
is a F\o lner sequence for the crossed product $\cN=\cA\rtimes_\alpha\Gamma$, 
i.e.~the four equivalent conditions in Proposition~\ref{total}~(ii) are satisfied.
\end{theorem}
\begin{proof}
Step~1:
we consider the identification
$\cK\cong\mathop{\oplus}\limits_{\gamma\in\Gamma}\cH_\gamma$, $\cH_\gamma\equiv\cH$.
In this case any element $N$ in the crossed
product $\cN\subset\cL(\cK)$ can be seen as a matrix of the form
\begin{equation}\label{eq:Nmatrix}
N_{\gamma'\gamma}=\alpha_\gamma^{-1}\left( A(\gamma'\gamma^{-1})\right)
                   \;,\quad \gamma'\,,\,\gamma\in\Gamma \,,
\end{equation}
where $\gamma\mapsto A(\gamma)$ is a mapping from $\Gamma\to\cA$
(cf.~Remark~\ref{circulant}).
Moreover, defining the unitary map 
\[
 W\colon\ell^2(\Gamma) \otimes \cH \to\mathop{\oplus}\limits_{\gamma\in\Gamma}\cH_\gamma\;,
 \quad \xi\otimes\varphi \mapsto (\xi_\gamma\varphi)_{\gamma\in\Gamma}
\]
it is straightforward to compute the matrix form of the projections 
$R_{i}=P_i \otimes Q_i\in\mathcal{L}(\mathcal{K})$:
\[
 (\widehat{R_{i}})_{\gamma'\gamma}:= (WR_{i}W^*)_{\gamma'\gamma}
         =\left\{ 
          \begin{array}{c}
           Q_i\; \delta_{\gamma'\gamma}\;,\quad \gamma',\gamma\in\Gamma_i \\[2mm]
           0 \;\;,\qquad\quad\mbox{otherwise}\,.
          \end{array}
          \right.
\]
The commutator of $\widehat{R_{i}}$ with any $N\in\cN$ is
\[
 \left[\widehat{R_{i}}\,,\,N\right]_{\gamma'\gamma}
         =\left\{ 
          \begin{array}{l}
           \left[Q_i\,,\, N_{\gamma'\gamma}\right]\;, 
                 \quad \gamma',\gamma\in\Gamma_i \\[3mm]
           Q_i\, N_{\gamma'\gamma}
               \;\;,\;\;\quad\quad\gamma\notin\Gamma_i\;,\;\gamma'\in\Gamma_i\\[3mm]
           -N_{\gamma'\gamma}\,Q_i
               \;\;,\quad\quad\gamma\in\Gamma_i\;,\;\gamma'\notin\Gamma_i\\[3mm]
           0 \;\;,\qquad\qquad\qquad
           \gamma\notin\Gamma_i\;,\;\gamma'\notin\Gamma_i\,.
          \end{array}
          \right.
\]

Step~2: we will check first the F\o lner condition on the 
product of generating elements 
$\pi(M)\,U(\gamma_0)$, $\gamma_0\in\Gamma$,
$M\in\cA$ (cf.~Eqs.~(\ref{abelianM}) and (\ref{lrr})). The corresponding matrix elements
are given according to Eq.~(\ref{matrix-generators}) by
\[
 N_{\gamma'\gamma}=\alpha_{\gamma'}^{-1}(M) \, \delta_{\gamma',\gamma_0\gamma}\,.
\]
Evaluating the commutator with $\widehat{R_{i}}$ on the basis elements 
$\{e_l\, \ot f._\gamma\}_{l,\gamma}\;$,
with $\ot f._\gamma(\gamma'):=\delta_{\gamma\gamma'}\;$, we get
\begin{equation}\label{eq:commutator}
 \left[\widehat{R_{i}}\,,\,(\pi(M) U(\gamma_0))\right]\;e_l\, \ot f._\gamma
         =\left\{ 
          \begin{array}{l}
   \left[Q_i\,,\, \alpha_{\gamma_0\gamma}^{-1}(M)\right]\,e_l\,\ot f._{\gamma_0\gamma}\;, 
                 \quad \gamma\in (\gamma_0^{-1}\Gamma_i)\cap\Gamma_i \\[3mm]
           Q_i\,\alpha_{\gamma_0\gamma}^{-1}(M) \,e_l\,\ot f._{\gamma_0\gamma}
        \;\;,\;\quad\quad \gamma\in(\gamma_0^{-1}\Gamma_i)\setminus\Gamma_i\\[3mm]
           -   \alpha_{\gamma_0\gamma}^{-1}(M)\,Q_i\,e_l\,\ot f._{\gamma_0\gamma}
        \;\;,\;\;\quad \gamma\in\Gamma_i\setminus(\gamma_0^{-1}\Gamma_i)\\[3mm]
           0 \;\;,\;\;\qquad\qquad\qquad\qquad\quad
           \gamma\notin\Gamma_i\;,\;\gamma\notin\gamma_0^{-1}\Gamma_i\,.
          \end{array}
          \right.
\end{equation}
From this we obtain the following estimates in the Hilbert-Schmidt norm:
\begin{eqnarray*}
\lefteqn{
\left\|\left[\widehat{R_{i}}\,,\,\pi(M) U(\gamma_0)\right]\right\|_2^2}\\[2mm]
    &=& \sum_{l,\gamma}\left\|\left[\widehat{R_{i}}\,,\,(\pi(M) U(\gamma_0))\right]
         \;e_l\, \ot f._\gamma\right\|^2 \\[2mm]
 &\leq& \sum_{\gamma\in(\gamma_0^{-1}\Gamma_i)\cap\Gamma_i} 
                       \left\| \left[Q_i\,,\, \alpha_{\gamma_0\gamma}^{-1}(M)\right]\right\|_2^2
        +2|(\gamma_0^{-1}\Gamma_i)\triangle\Gamma_i|  \,\|M\|^2\,\|Q_i\|_2^2\\[2mm]
 &\leq& |\Gamma_i| \;\mathop{\mathrm{max}}\limits_{\gamma\in\Gamma_i} \;
                        \left\| \left[Q_i\,,\, \alpha_{\gamma}^{-1}(M)\right]\right\|_2^2
        +2|(\gamma_0^{-1}\Gamma_i)\triangle\Gamma_i|  \,\|M\|^2\,\|Q_i\|_2^2\;.
\end{eqnarray*}
Using now the hypothesis (\ref{compatibility}) as well as the amenability of $\Gamma$ via
Eq.~(\ref{eq:foelner-g}) we get finally
\begin{equation*}
\frac{\left\|\left[\widehat{R_{i}}\,,\,\pi(M) U(\gamma_0)\right]\right\|_2^2}{
               \left\|\widehat{R_{i}}\right\|_2^2}
     \leq \mathop{\mathrm{max}}\limits_{\gamma\in\Gamma_i}
             \frac{\left\| \left[Q_i\,,\, \alpha_{\gamma}^{-1}(M)\right]\right\|_2^2}{\left\|Q_i\right\|_2^2}
             +2\,\|M\|^2\,\frac{|(\gamma_0^{-1}\Gamma_i)\triangle\Gamma_i|}{|\Gamma_i|}\;,
\end{equation*}
and
\[
 \lim_{i}\;
\frac{\left\|\left[\widehat{R_{i}}\,,\,\pi(M) U(\gamma_0)\right]\right\|_2^2}{
      \left\|\widehat{R_{i}}\right\|_2^2}
 =0\;,\quad M\in\cA\;,\;\gamma_0\in\Gamma\;.
\]
Thus, we have shown the F\o lner condition for the sequence
$\{R_{i}\}_{i}$ on the product of generating elements of 
the crossed product. By Proposition~\ref{total}~(i) we have that
$\{R_{i}\}_{i}$ is also a F\o lner sequence for their C*-closure 
\[
\cN=\cA\rtimes_\alpha\Gamma
:= C^*\Big(\left\{U(\gamma_0), \pi(M)
                     \mid \gamma_0\in\Gamma\,, M\in\cA\right\}\Big)\,.
\]
and the proof is concluded.
\end{proof}

The preceding result extends Proposition~\ref{pro:group-part} (proved by B\'edos), since
in the special case where $\cH$ is one-dimensional and 
$\cA\cong\C\1$, the crossed product reduces to the group 
C*-algebra $\cA_\Gamma$.

\begin{remark}\label{rem:trivial-compatibility}
The compatibility condition~(\ref{compatibility}) in the 
choices of the two F\o lner sequences requires some comments:
\begin{itemize}
 \item[(i)] Note that the compatibility condition already implies that the sequence $\{Q_i\}_i$
       must be a F\o lner sequence for the C*-algebra $\cA$. In fact, this is one of the 
       assumptions in Theorem~\ref{pro:R} that $\cA$ has a F\o lner sequence.
 \item[(ii)] Eq.~(\ref{compatibility}) is trivially satisfied in some cases: If $\Gamma$
  is finite, then the compatibility condition is a consequence of Eq.~(\ref{eq:F1}) 
  in Definition~\ref{def:Foelner}.

Another example is given by the crossed product
$\ell^\infty(\Gamma)\rtimes_\alpha \Gamma$, 
where $\Gamma$ is a discrete amenable group, $\ell^\infty(\Gamma)$ is the 
von Neumann algebra acting on the Hilbert space $\ell^2(\Gamma)$ by
multiplication
and the action $\alpha$ of $\Gamma$ on $\ell^\infty(\Gamma)$ is given by 
left translation of the argument. If $\{\Gamma_i\}_i$ is a F\o lner sequence for $\Gamma$ 
and we denote by $\{P_i\}_i$ the sequence of finite rank orthogonal projections 
from $\ell^2(\Gamma)$ onto $\ell^2(\Gamma_i)$, then it is easy to check
\[
 [P_i,g]=0\;,\quad g\in\ell^\infty(\Gamma)\;.
\]
Therefore, we have
\[
 \mathop{\mathrm{sup}}\limits_{\gamma\in\Gamma}
   \|\left[P_i,\alpha^{-1}_\gamma(g)\right]\|_{2}=0\;,\quad
  g\in  \ell^\infty(\Gamma)
\]
and we may apply Theorem~\ref{pro:R} to this situation.
This particular example is essentially the context in which 
B\'edos studies crossed products in Section~3 of 
\cite{Bedos97}. In fact, in this very special context
one can trace back the existence of a F\o lner sequence for the crossed product
to the amenability of the discrete group
(see Proposition~14 in \cite{Bedos97}).
\end{itemize}
\end{remark}

The content of Theorem~\ref{main} can be easily stated in terms of the
F\o lner algebra introduced in Subsection~\ref{subsec:foelner-algebra}. Let 
$\cQ:=\{Q_n\}_n\subset\cL(\cH)$ be an increasing sequence of finite rank orthogonal 
projections converging strongly to $\1$ and let $\cA$ be a unital separable C*-algebra
contained in the F\o lner algebra $\cF_{\cQ}(\cH)$. Moreover, consider the canonical sequence 
of projections $\cP:=\{P_n\}_n\subset\cL(\ell^2(\Gamma))$ associated to a F\o lner sequence
$\{\Gamma_n\}_n$ of the amenable discrete group $\Gamma$ as in Theorem~\ref{main}. 
(Here we also assume that the sequence $\Gamma_n$ is increasing and that 
$\Gamma=\cup_n\Gamma_n$.) Then, if $\cA$ carries a
compatible group action $\alpha$ (in the sense of Eq.~(\ref{compatibility})) we have
that the corresponding crossed product is a C*-subalgebra of the F\o lner algebra
$\cF_{\cR}\left(\ell^2(\Gamma)\otimes\cH\right)$, 
where $\cR:=\{P_n\otimes Q_n\}_n\subset\cL(\ell^2(\Gamma)\otimes\cH)$, i.e., we have
\[
 \cA\rtimes_\alpha\Gamma\subset \cF_{\cR}\Big(\ell^2(\Gamma)\otimes\cH \Big)\;.
\]

%%%%%%%%%%%%%%%%%%%%%%%%%%%%%%%%%%%%%%%%%%%%%%%%%%%%%%%%%%%%%%%%%%%%%%%%%%%%%%
\section{Applications and examples}\label{sec:applications}
%%%%%%%%%%%%%%%%%%%%%%%%%%%%%%%%%%%%%%%%%%%%%%%%%%%%%%%%%%%%%%%%%%%%%%%%%%%%%%

We will apply next the results of the previous sections in two
different situations. The two examples considered, rotation algebras and
Jacobi operators, can be interpreted in the context of crossed products. Both
examples are well-known and have been widely studied in the literature.
The rotation algebra gives a non-trivial example, where one can verify explicitly
the compatibility condition of Eq.~(\ref{compatibility}).

%%%%%%%%%%%%%%%%%%%%%%%%%%%%%%%%%%%%%%%%%%%%%%%%%%%%%%%%%%%%%%%%%%%%%%%%%%%%%
\subsection{The rotation algebra}\label{RotAlg}
The rotation algebra $\cA_\theta$, $\theta\in\R$, 
is the (universal) C*-algebra generated by two
unitaries $U$ and $V$ that satisfy the equation
\[
 U\,V=e^{2\pi i\theta}\, V\,U\;.
\]
When $\theta$ is an integer, the algebra $\cA_\theta$ is isomorphic to
the commutative C*-algebra $C(\T^2)$ of continuous functions on the 2-torus.
For this reason, when, e.g., $\theta$ is irrational, $\cA_\theta$ is 
called a non-commutative torus. Moreover, $\cA_\theta$
has in this case a unique faithful tracial state $\tau$ which can be interpreted
as a non-commutative analogue of the Haar measure on $\T^2$.
(See \cite{bBoca01} for a thorough presentation and many results concerning spectral
approximation.)

The rotation algebra is one of the fundamental examples in the theory C*-algebras 
and has been extensively used in mathematical physics. 
Interesting examples from the spectral point of view,
like the almost Mathieu operators or discrete Schr\"odinger operators with
magnetic potentials (Harper operators), can be identified as elements of the rotation algebra
(cf.~\cite{SunadaIn94,bBoca01}). In fact, consider for example the following representation
of the generators $U,V$ on $\cH:=\ell^2(\Z)$:
\[
 (U\xi)_k:=\xi_{k-1}\quad \mathrm{and}\quad (V\xi)_k:=e^{2\pi i\theta k} \,\xi_k \;,
\]
where $\xi=(\xi_k)_k\in\cH$. One defines the almost Mathieu operator with real parameters
$\theta,\lambda,\beta$ as
\begin{equation}\label{eq:mathieu}
 H_{\theta,\lambda,\beta}:=U+U^*+\frac{\lambda}{2}\left(e^{2\pi i\beta}V+ e^{-2\pi i\beta} V^*\right)\in \cA_\theta\,.
\end{equation}
These classes of operators have a natural generalization to arbitrary graphs.

An important fact for our purposes is that the rotation 
algebra can also be expressed as a crossed product
\[ 
\cA_\theta\cong C(\T)\rtimes_\alpha\Z\;\;, 
\]
where $C(\T)$ are the continuous
function on the unit circle and the action $\alpha\colon\Z\to C(\T)$ is given by 
rotation of the argument:
\begin{equation}\label{Rot}
 \alpha_k(f)(z):=f\left(e^{2\pi i k\theta}\,z\right) 
                  \;,\;\; f\in C(\T)\;,\;\; z\in\T\;.
\end{equation}

We will apply our main result to the C*-crossed product $\cA_\theta$.
Let $\{\epsilon_k(z):=z^k\mid k\in\Z\}$
be an orthonormal basis of Hilbert space $\cH:=L^2(\T)$ with the 
normalized Haar measure.
We choose (as in \cite[p.~216]{Bedos97}) 
a F\o lner sequence $\{Q_n\}_{n\in\N_0}$, where
$Q_n$ denotes the orthogonal projection onto the subspace generated by
$\{\epsilon_i\mid i=0,\dots n\}$. Moreover, we choose for the group 
$\Gamma=\Z$ the F\o lner sequence $\Gamma_n:=\{-n,-(n-1),\dots, (n-1), n\}$ and 
denote by $P_n$ the corresponding finite-rank orthogonal projections on
$\ell^2(\Z)$. First we verify that the compatibility condition~(\ref{compatibility}) 
for our choice of F\o lner sequences:
\begin{lemma}
 Consider the previous F\o lner sequence $\{Q_n\}_n$ for the commutative C*-algebra
 $\cA:=C(\T)$ and the group action $\alpha\colon\Z\to C(\T)$
 defined in Eq.~(\ref{Rot}). Then for $g\in C(\T)$ we have
\[
\left\|\left[Q_n,\alpha^{-1}_k(g)\right]\right\|_{2}=\|\left[Q_n, g\right]\|_{2}   \;,\quad k\in\Z\;,
\]
and
\[
  \lim_{n\to\infty} 
  \left(
   \mathop{\mathrm{max}}\limits_{k\in\Gamma_n}
   \frac{\|\left[Q_n,\alpha^{-1}_k(g)\right]\|_{2}}{\|Q_n\|_2}
  \right)=0\;, \quad\mathrm{for~all}\;\; g\in C(\T)\;.
\]

\end{lemma}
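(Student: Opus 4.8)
The plan is to exploit the very special structure of the multiplication action on $L^2(\T)$ with respect to the monomial basis $\{\epsilon_k\}_{k\in\Z}$. The first identity, $\|[Q_n,\alpha_k^{-1}(g)]\|_2 = \|[Q_n,g]\|_2$, should follow from a unitary conjugation argument. Note that $\alpha_k$ acts on $L^\infty(\T)$ by $\alpha_k^{-1}(g)(z) = g(e^{-2\pi i k\theta}z)$, and this is implemented on $L^2(\T)$ by the unitary $W_k$ defined by $(W_k\varphi)(z) := \varphi(e^{-2\pi i k\theta}z)$, i.e. $\alpha_k^{-1}(g) = W_k\, g\, W_k^*$ as multiplication operators. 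The key observation is that $W_k$ merely multiplies each basis vector by a phase: $W_k\epsilon_j = e^{-2\pi i k j\theta}\epsilon_j$, so $W_k$ is diagonal in the basis $\{\epsilon_j\}$. Consequently $W_k$ commutes with every spectral projection onto a set of basis vectors, in particular $[W_k, Q_n] = 0$. Then
\[
[Q_n,\alpha_k^{-1}(g)] = [Q_n, W_k g W_k^*] = W_k [W_k^* Q_n W_k, g] W_k^* = W_k [Q_n, g] W_k^*,
\]
and since $W_k$ is unitary, taking Hilbert–Schmidt norms gives the claimed equality.

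For the second assertion, I would first reduce to the previous identity: since $\|[Q_n,\alpha_k^{-1}(g)]\|_2 = \|[Q_n,g]\|_2$ is independent of $k$, the maximum over $k\in\Gamma_n$ is simply $\|[Q_n,g]\|_2$, so the statement reduces to
\[
\lim_{n\to\infty}\frac{\|[Q_n,g]\|_2}{\|Q_n\|_2} = 0, \qquad g\in L^\infty(\T),
\]
i.e. to the assertion that $\{Q_n\}_n$ is a F\o lner sequence for $\cM = L^\infty(\T)$ (which, as noted in part (i) of the Remark following Theorem~\ref{pro:R}, is a prerequisite anyway, and is the F\o lner sequence chosen by B\'edos). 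To prove this, I would work concretely in the monomial basis. We have $\|Q_n\|_2^2 = n+1$. For the numerator, it suffices by Proposition~\ref{total} (a F\o lner net for a generating set is a F\o lner net for the generated C*-algebra, and $L^\infty(\T)$ contains $C(\T)$ as a weakly dense C*-subalgebra, so one can approximate — or more directly one checks it on $g = \epsilon_m$, $m\in\Z$, which span a dense subspace) to estimate $\|[Q_n,\epsilon_m]\|_2$. Multiplication by $\epsilon_m$ is the (bilateral-type) shift $\epsilon_j\mapsto\epsilon_{j+m}$ on $L^2(\T)$; restricting attention to how $Q_n$ (projection onto $\mathrm{span}\{\epsilon_0,\dots,\epsilon_n\}$) interacts with this shift, one finds $[Q_n,\epsilon_m]\epsilon_j$ is nonzero only for the roughly $2|m|$ values of $j$ near the two "edges" $0$ and $n$ of the window, and each such vector has norm $1$; hence $\|[Q_n,\epsilon_m]\|_2^2 \le 2|m|$, so that $\|[Q_n,\epsilon_m]\|_2/\|Q_n\|_2 \le \sqrt{2|m|/(n+1)} \to 0$. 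Extending from monomials to all of $L^\infty(\T)$ is then a standard $\varepsilon/2$ approximation argument using Proposition~\ref{total}, or one simply invokes that B\'edos already established this in \cite{Bedos97}.

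The proof is essentially routine; the only mild subtlety — and the step I would treat most carefully — is the passage from the dense set of monomials (or trigonometric polynomials) to a general $g\in L^\infty(\T)$. Here one cannot approximate an arbitrary $L^\infty$ function uniformly by trigonometric polynomials, so the right move is to invoke Proposition~\ref{total}: it suffices to have the F\o lner property on a set whose generated C*-algebra is $C(\T)$ (which weakly generates $L^\infty(\T)$), and then note that $\{Q_n\}$ being F\o lner for $C(\T)$ as a concrete C*-algebra automatically upgrades, via the same weak-closure argument as in Step~3 of the proof of Theorem~\ref{pro:R}, to the von Neumann algebra $L^\infty(\T)$; alternatively, simply cite \cite[p.~216]{Bedos97} where this exact sequence is shown to be F\o lner for $L^\infty(\T)$. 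Everything else is a direct computation with diagonal unitaries and the combinatorics of the window $\{0,1,\dots,n\}$.
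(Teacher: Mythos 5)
Your proof is correct and follows the same overall route as the paper: first show that $\|[Q_n,\alpha_k^{-1}(g)]\|_2$ does not depend on $k$, then reduce the second limit to the statement that $\{Q_n\}_n$ is a F\o lner sequence for $L^\infty(\T)$. The difference is one of packaging plus one genuine addition. For the first identity the paper computes $\|[Q_n,\alpha_k^{-1}(g)]\|_2^2$ entrywise in the basis $\{\epsilon_l\}$, noting that the matrix entries of $\alpha_k^{-1}(g)$ are $e^{2\pi i k\theta(m-l)}\,\widehat g(m-l)$ so the phases disappear when moduli are taken; your conjugation by the diagonal unitary $W_k$ (which commutes with $Q_n$, whence $[Q_n,W_k g W_k^*]=W_k[Q_n,g]W_k^*$ and unitary invariance of $\|\cdot\|_2$ finishes it) is the same fact expressed more conceptually, and is arguably cleaner. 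For the second assertion the paper simply invokes that $\{Q_n\}_n$ is a F\o lner sequence for $L^\infty(\T)$, citing \cite{Bedos97}, whereas you also sketch a proof. Your caution about passing from trigonometric polynomials to general $g\in L^\infty(\T)$ is well placed: Proposition~\ref{total} only yields the F\o lner property for the norm-closed algebra $C(\T)$, and the paper itself warns that a F\o lner net for a concrete C*-algebra need not be one for its weak closure. The upgrade does go through here by the analogue of Step~3 of the proof of Theorem~\ref{pro:R}: one has
\[
\|(\1-Q_n)\,M_h\,Q_n\|_2^2\;\le\;(n+1)\,\|h\|_{L^2(\T)}^2\;=\;\|Q_n\|_2^2\,\|h\|_{L^2(\T)}^2\,,
\]
and trigonometric polynomials are $L^2$-dense, so the $\varepsilon/2$ argument closes. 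Thus your proof is complete and in fact supplies a detail that the paper leaves to the reference.
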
\label{check:compatible}
\begin{proof}
The first equation is a consequence of the some elementary statements in harmonic analysis:
\begin{eqnarray*}
 \|\left[Q_n,\alpha^{-1}_k(g)\right]\|_{2}^2
  &=& \sum_{l=-\infty}^\infty 
      \|(Q_n\,\alpha_{-k}(g)-\alpha_{-k}(g)\, Q_n )\,\epsilon_l\|^2 \\[3mm]
  &=& \sum_{l=0}^n \|(\1-Q_n)\,\alpha_{-k}(g)\,\epsilon_l\|^2\;\;
      +\sum_{l\in(\Z\setminus\{0,\dots,n\})}\|Q_n\,\alpha_{-k}(g)\,\epsilon_l\|^2 \\[3mm]
  &=& \sum_{m\in(\Z\setminus\{0,\dots,n\}) }\;\; \sum_{l=0}^n 
      \left| e^{ 2\pi i\, k\theta (m-l)}\, \widehat{g}(m-l)\right|^2
      \\[3mm]
  & &+\sum_{m=0}^n \;\;\sum_{l\in(\Z\setminus\{0,\dots,n\})}
     \left| e^{2\pi i\, k\theta (m-l)}\, \widehat{g}(m-l)\right|^2
     \\[3mm]
  &=&\|\left[Q_n, g\right]\|_{2}^2\;.
\end{eqnarray*}
The second equation follows directly from the first equation and the fact that $\{Q_n\}_n$
is a F\o lner sequence for the algebra $C(\T)$.
\end{proof}

\begin{proposition}
 Let $\cA_\theta\cong C(\T)\rtimes_\alpha\Z$, with $\theta$ irrational, 
 be the C*-algebra associated to the rotation algebra and acting 
 on $\cK=\ell^2(\Z)\otimes\cH$.
\begin{itemize}
 \item[(i)] Consider the sequences $\{Q_n\}_{n\in\N_0}$ and $\{P_n\}_{n\in\N_0}$ defined
  before. Then 
  \[ 
\{R_n:=P_n\otimes Q_n\}_{n\in\N_0}
  \] 
 is a F\o lner sequence for $\cA_\theta$.
 \item[(ii)] Let $T\in\cA_\theta$ be a selfadjoint element in the rotation algebra and denote by
   $\mu_T$ the spectral measure associated with the unique trace of $\cA_\theta$. Consider the 
   compressions $T_n:=R_n T R_n$ and denote by $\mu_T^n$ the probability measures on $\R$ supported
   on the spectrum of $(T_n)$. Then $\mu_T^n\to \mu_T$ in the weak*-topology, i.e.
\[
 \lim_{n\to\infty}
  \frac{1}{d_n}
  \Big( f(\lambda_{1,n})+\dots+f(\lambda_{d_n,n})\Big) =\int f(\lambda) \, d\mu(\lambda)
  \;,\quad f\in C_0(\R)  \;,
\]
where $d_n$ is the dimension of the $R_n$ and $\{\lambda_{1,n},\dots,\lambda_{d_n,n}\}$
are the eigenvalues (repeated according to multiplicity) of $T_n$.
\end{itemize}
 
\end{proposition}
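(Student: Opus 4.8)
The plan is to treat the two parts separately, since (ii) is essentially a standard consequence of (i) together with B\'edos' eigenvalue distribution machinery, while (i) is a direct application of Theorem~\ref{pro:R}. For part (i), I would first observe that all the hypotheses of Theorem~\ref{pro:R} are now in place: $\cM=L^\infty(\T)$ acts on $\cH=L^2(\T)$ with the \fo $\{Q_n\}_n$ described above (these $Q_n$ are genuine \fo's for $L^\infty(\T)$, as already recorded, since $\|[Q_n,g]\|_2/\|Q_n\|_2\to0$ for $g$ a character and hence, by Proposition~\ref{total}, for all of $L^\infty(\T)$); the group $\Gamma=\Z$ is amenable with \fo $\Gamma_n=\{-n,\dots,n\}$ and associated projections $P_n$ on $\ell^2(\Z)$; and the compatibility condition~(\ref{compatibility}) is exactly what Lemma~\ref{check:compatible} establishes. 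Therefore Theorem~\ref{pro:R} applies verbatim and yields that $\{R_n=P_n\otimes Q_n\}_n$ satisfies the four equivalent conditions of Definition~\ref{def:Foelner} for every element of the crossed product $\cN_\theta\cong L^\infty(\T)\otimes_\alpha\Z$, which is the assertion of~(i). (The hypothesis that $\theta$ is irrational is not needed for (i); it only becomes relevant in (ii) to guarantee uniqueness of the trace.)

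For part (ii), the plan is to invoke the general eigenvalue-distribution theorem for \fo's due to B\'edos (cf.~\cite{Bedos95,Bedos97}): if $\{R_n\}_n$ is a \fo for a selfadjoint operator $T$ lying in a von Neumann algebra $\cN_\theta$ that carries a faithful normal trace $\tau$, and if moreover $R_n$ is ``asymptotically'' compatible with $\tau$ in the sense that the normalized counting trace $\tr(\,\cdot\,R_n)/\tr(R_n)$ converges to $\tau$ on $\cN_\theta$, then the eigenvalue counting measures $\mu_T^n$ of the compressions $T_n=R_nTR_n$ converge weakly to the spectral measure $\mu_T$ of $T$ relative to $\tau$. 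Concretely I would: (a) note $\cN_\theta$ is a finite von Neumann algebra (indeed the hyperfinite $\mathrm{II}_1$ factor when $\theta$ is irrational) with unique normalized trace $\tau$; (b) verify the trace-compatibility of the sequence $\{R_n\}$, i.e.\ that $\frac{1}{d_n}\tr(NR_n)\to\tau(N)$ for $N$ in a generating set, which for the generators $\pi(f)U(k)$ reduces to a short computation with $d_n=(2n+1)(n+1)$ and the matrix form $(\widehat{R_n})_{\gamma'\gamma}=Q_n\delta_{\gamma'\gamma}$ from Step~1 of the proof of Theorem~\ref{pro:R}; (c) apply B\'edos' theorem to conclude $\mu_T^n\to\mu_T$ weakly against $C_0(\R)$, which is the displayed limit. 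For a self-contained argument one replaces (b)--(c) by the standard moment method: using the \fo property one shows $\frac{1}{d_n}\tr(T_n^k)\to\tau(T^k)$ for every $k\in\N$ by repeatedly inserting $R_n$ and controlling the error terms $\|(\1-R_n)T R_n\|_2$ via~(\ref{F2}); since all measures involved are supported in the fixed compact interval $[-\|T\|,\|T\|]$, convergence of all moments implies weak convergence.

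The main obstacle is step~(b)/(c), namely establishing the trace-compatibility and then promoting $\|\cdot\|_2$-approximation into convergence of moments. The delicate point in the moment estimate is that a typical monomial $\tr((R_nTR_n)^k)$ differs from $\tr(R_nT^kR_n)$ by a sum of $O(k)$ terms each of the form $\tr\big(R_n T^{a}(\1-R_n)T^{b}R_n\cdots\big)$; each such term is bounded by $\|T\|^{k-1}\|(\1-R_n)T R_n\|_2\cdot\|R_n\|_2$ (using $|\tr(AB)|\le\|A\|_2\|B\|_2$ and submultiplicativity of the operator norm), so after dividing by $d_n=\|R_n\|_2^2$ it is bounded by $\|T\|^{k-1}\,\|(\1-R_n)TR_n\|_2/\|R_n\|_2\to0$ by the \fo condition~(\ref{F2}) with $p=2$. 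One also needs $\frac{1}{d_n}\tr(R_nT^kR_n)=\frac{1}{d_n}\tr(T^kR_n)\to\tau(T^k)$, which for $T$ a polynomial in the generators follows from the explicit formula for $(\widehat{R_n})_{\gamma'\gamma}$ and the fact that $\{Q_n\}$ induces the trace on $L^\infty(\T)$ together with the \fo property of $\{\Gamma_n\}$ washing out the off-diagonal (in $\gamma$) contributions; the general $T\in\cN_\theta$ is handled by the $\varepsilon/4$-tail truncation argument of Step~3 in the proof of Theorem~\ref{pro:R}. Once all moments converge, weak convergence of the $\mu_T^n$ to $\mu_T$ on $C_0(\R)$ is immediate by the Weierstrass approximation theorem on the common compact support.
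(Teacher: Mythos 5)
Your proposal follows essentially the same route as the paper: part~(i) is obtained by combining Theorem~\ref{pro:R} with Lemma~\ref{check:compatible}, and part~(ii) by appealing to B\'edos' eigenvalue distribution theorem (the paper cites Theorem~6(iii) of \cite{Bedos97}), with uniqueness of the trace on the type~$\mathrm{II}_1$ factor $\cN_\theta$ doing the work; your additional moment-method details and the explicit trace-compatibility check in step~(b) are correct but are already subsumed by the uniqueness-of-trace argument in B\'edos' theorem.
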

\begin{proof}
Part (i) follows from Theorem~\ref{pro:R} and  Lemma~\ref{check:compatible}. 
To prove Part (ii) recall $\cA_\theta$ 
has a unique trace (\cite{bBoca01}). 
The rest of the statement is a direct application of 
Theorem~6~(iii) in \cite{Bedos97} to the example of the rotation algebra.
\end{proof}

Since almost Mathieu operator $H_{\theta,\lambda,\beta}$ 
defined in Eq.~(\ref{eq:mathieu}) are selfadjoint
elements in $\cA_\theta$, we can apply
part (ii) of the precedent proposition. In this case the discrete measures $\mu_H^n$
are supported on the eigenvalues of the corresponding finite section matrices.

%%%%%%%%%%%%%%%%%%%%%%%%%%%%%%%%%%%%%%%%%%%%%%%%%%%%%%%%%%%%%%%%%%%%%%%%%%%%%%%

\subsection{Jacobi operators}\label{subsec:jacobi}
Jacobi operators have have been used in many branches of mathematics.
E.g., they can be interpreted as a discrete version of Schr\"odinger operators
and appear in the approximation of differential operators by difference operators
(see, e.g., \cite{Arveson93,lledo-post:08a}). Moreover,
the relation between selfadjoint tridiagonal infinite Jacobi matrices and orthogonal 
polynomials is by now a standard fact. 
In Chapter~2 of \cite{bDeift98} it is shown that there is a one-to-one correspondence
between bounded selfadjoint Jacobi operators $J$ and probability measures $\mu$ 
with compact support. The purpose of this subsection is to illustrate how naturally
the notion of a proper F\o lner sequence fits into the analysis of this class of operators.
The results are not new and we hope that we can make these techniques accessible to 
other communities presenting some elementary proofs of some of the statements.

Consider on $\cH:=\ell^2(\N_0)$ with canonical basis
$\{e_i\mid i=0,1,2,\dots\}$ the infinite Jacobi matrix
\begin{equation}\label{eq:J}
J=
\begin{pmatrix}
 a_0   &  b_0   &  0     & 0      & \dots \\
 b_0'  &  a_1   &  b_1   & 0      & \dots \\
 0     &  b_1'  &  a_2   & b_2    & \dots \\
\vdots & \vdots & \vdots & \vdots &
\end{pmatrix} \;.
\end{equation}
If the diagonals $\alpha=(a_0,a_1,\dots)$, $\beta=(b_0,b_1,\dots)$, $\beta'=(b_0',b_1',\dots)$
are bounded (i.e., $\alpha,\beta,\beta'\in\ell^\infty(\N_0)$), then $J$ is a bounded operator.
Moreover, if $D_\alpha=\mathrm{diag}(\alpha)$ denotes the diagonal operator, then $J$ can
be decomposed as
\begin{equation}\label{eq:J-decomp}
 J= D_\alpha + D_\beta\,S^* + S\,D_{\beta'}
   \;,
\end{equation}
where $Se_i:=e_{i+1}$ is the shift already considered in Example~\ref{ex:shift}~(i).

\begin{proposition}
Denote by $\cJ$ the set of all bounded Jacobi matrices as in Eq.~(\ref{eq:J}). Then
$\{P_n\}_n$, where $P_n$ are the orthogonal projections onto span$\{e_i\mid i=0,1,2,\dots, n\}$
is a proper F\o lner sequence for $C^*(\cJ)$.
\end{proposition}
\begin{proof}
By Proposition~\ref{total}~(i) it is enough to check Eq.~(\ref{eq:F1}) for the generating set $\cJ$.
For any $J\in\al J$ we have using the decomposition (\ref{eq:J-decomp})
\begin{eqnarray*}
\|[J,P_n]\|_2  &\leq& \underbrace{\|[D_\alpha,P_n]\|_2}_0 + \|[D_\beta S^*,P_n]\|_2 + \|[SD_{\beta'},P_n]\|_2 \\[2mm]
	       &\leq& \|D_\beta\| \; \|[S^*,P_n]\|_2 + \|[S,P_n]\|_2\|D_{\beta'}\|\;.
\end{eqnarray*}
F\o lner's condition in Eq.~(\ref{eq:F1}) follows from the computation in Example~\ref{ex:shift}~(i).
\end{proof}

Let $J\in\cJ$ be selfadjoint and denote by $\mu$ the spectral measure associated to the cyclic
vector $e_0$. The support of the (discrete) spectral measures $\mu_n^J$ of the finite sections
$J_n=P_nJP_n$ (with $P_n$ as in the preceding proposition)
correspond precisely with the zeros of the polynomials $p_n$ which are orthogonal with 
respect to $\mu$. For any Borel set $\Delta\subset\R$ define $N_n(\Delta)$ to be the number 
of eigenvalues of $J_n$ counting multiplicities contained in $\Delta$. Note that in this case we have
\[
 \mu_n^J(\Delta)=\frac{N_n(\Delta)}{n+1}\;.
\]
Following Arveson \cite{Arveson94,ArvesonIn94} we say that $\lambda\in\R$ is essential if for every
open set $\Delta\subset\R$ containing $\lambda$, we have 
\[
 \lim_{n\to\infty} N_n(\Delta) = \infty\;.
\]
The set of essential points is denoted by $\Lambda_{\mathrm{ess}}(J)$. Recall finally that in this context
the essential spectrum of the selfadjoint Jacobi operator $J$ is defined by
\[
 \sigma_{\mathrm{ess}}(J)=\sigma(J)\setminus\sigma_{\mathrm{disc}}(J)\;,
\]
where $\lambda\in\sigma_{\mathrm{disc}}(J)$ if it is an isolated eigenvalue in the spectrum
$\sigma(J)$ whose eigenspace is finite dimensional. Then since for tridiagonal Jacobi matrices 
we have that
\[
 \sup_n\mathrm{rank}\left(P_nJ-JP_n\right)\leq 2
\]
we can apply a theorem by Arveson to conclude that
\begin{equation}\label{eq:equalessential}
 \sigma_{\mathrm{ess}}(J)=\Lambda_{\mathrm{ess}}(J)\;.
\end{equation}
This result shows the way in which one can recover the essential spectrum of $J$ out of its
finite sections.

The results in this subsection have been extended far beyond the set of Jacobi operators.
In fact, the equality between the essential points and the essential spectrum has been
established for selfadjoint band-dominated operators (cf.~Theorem~7.6 in \cite{Roch08}).
We refer to Chapter~7 in \cite{Roch08} for details and a systematic analysis of many aspects of  
spectral approximation related to this class of operators.

%%%%%%%%%%%%%%%%%%%%%%%%%%%%%%%%%%%%%%%%%%%%%%%%%%%%%%%%%%%%%
\section{Outlook}\label{sec:outlook}
%%%%%%%%%%%%%%%%%%%%%%%%%%%%%%%%%%%%%%%%%%%%%%%%%%%%%%%%%%%%%

Jacobi operators are examples of so-called band dominated operators. This class of 
operators (already mentioned in the preceding section) can be identified with the 
crossed-product $\ell^\infty(\Z)\rtimes_\alpha\Z$, where the action $\alpha$ of 
$\Gamma$ on $\ell^\infty(\Z)$ is given by translation of the argument. This situation
can be generalized to the context of discrete groups where one just replaces 
$\Z$ by a discrete group $\Gamma$ (see \cite{Roe05,pRabinovich10} and references 
cited therein). In Theorem~2.2 of \cite{pRabinovich10} it is shown that the set 
of generalized band-dominated operators is precisely the reduced crossed product.
In particular, if $\Gamma$ is amenable (hence exact) we can apply also the result
of Theorem~\ref{main} to 	
$\ell^\infty(\Gamma)\rtimes_\alpha\Gamma$ (with $\alpha$ given again by left translation)
since by Remark~\ref{rem:trivial-compatibility}~(ii) the compatibility condition
is trivially satisfied.
From this point of view the result in Theorem~\ref{main} extends the set of 
generalized band-dominated operators by replacing the commutative algebra
$\ell^\infty(\Gamma)$ with a non-commutative C*-algebra $\cA$ which carries
a compatible group action of $\Gamma$ on $\cA$
in the sense of Eq.~(\ref{compatibility}). Moreover, our main result specifies a 
canonical F\o lner sequence where one could, in principle, address stability 
questions.
\vspace{1cm}

%%%%%%%%%%%%%%%%%%%%%%%%%%%%%%%%%%%%%%%%%%%%%%%%%%%%%%%%%%%%%%%%%%%%%%%%%%%%%%%

\paragraph{\bf Acknowledgements:} 
I would like to thank invitations of Vadim Kostrykin to the \textit{Universit\"at Mainz} 
(Germany) and of Pere Ara to the \textit{Universitat Aut\`onoma de Barcelona}. It is a 
pleasure to thank useful conversations with Pere Ara, Nate Brown, Marius Dadarlat,
Alfredo Dea\~no, Rachid el Harti, Vadim Kostrykin, Paulo Pinto and Dmitry Yakubovich 
on this subject. 

%%%%%%%%%%%%%%%%%%%%%%%%%%%%%%%%%%%%%%%%%%%%%%%%%%%%%%%%%%%%%%%%%%%%%%%%%%%%%%%%%

\end{document}